\newcommand{\R}{\mathbb{R}}
\newcommand{\N}{\mathbb{N}}
\newcommand{\INT}{\mathrm{int}}
\newcommand{\Ll}{\mathscr L}
\newcommand{\Ha}{\mathscr H}
\newcommand{\C}{\complement}
\newtheorem{thm}{Theorem}
\newtheorem{lemma}{Lemma}
\newtheorem{cor}[lemma]{Corollary}
\newtheorem{prop}[lemma]{Proposition}
\theoremstyle{definition}
\newtheorem{example}[lemma]{Example}
\newtheorem{defi}[lemma]{Definition}
\newtheorem{remark}[lemma]{Remark}
\newcommand{\be}{\begin{equation}}
\newcommand{\ee}{\end{equation}}
\numberwithin{equation}{section}
\renewcommand{\epsilon}{\varepsilon}
\newcommand{\commentmargin}[1]{\marginpar{\tiny\textit{#1}}}
\newcommand{\Dnote}[1]{\commentmargin{\color{red} D: #1}}
\newcommand{\Lnote}[1]{\commentmargin{\color{blue} L: #1}}
\newcommand{\FR}[1]{\commentmargin{\color{ForestGreen} FR: #1}}
\begin{document}
\title{On the Lebesgue measure of the boundary of the evoluted set\tnoteref{f1}}
\tnotetext[f1]{LC, FR and DV are partially supported by the Gruppo Nazionale per l'Analisi Matematica, la Probabilit\`a\ e le loro Applicazioni (GNAMPA) of the Istituto Nazionale di Alta Matematica (INdAM). FR is partially supported by the University of Padua under the STARS Grants programme CONNECT: Control of Nonlocal Equations for Crowds and Traffic models.}

\author[1]{Francesco Boarotto}

\ead{francesco.boarotto@math.unipd.it}

\author[1]{Laura Caravenna}
\ead{laura.caravenna@unipd.it}

\author[1]{Francesco Rossi\corref{cor1}}
\ead{francesco.rossi@math.unipd.it}

\author[1]{Davide Vittone}
\ead{davide.vittone@unipd.it}

\address[1]{Dipartimento di Matematica ``Tullio Levi-Civita'', Universit\`a\ degli Studi di Padova, Via Trieste 63, 35121 Padova, Italy}
	
	\begin{abstract} The evoluted set is the set of configurations reached from an initial set via a fixed flow for all times in a fixed interval. We find conditions on the initial set and on the flow ensuring that the evoluted set has negligible boundary (i.e. its Lebesgue measure is zero). We also provide several counterexample showing that the hypotheses of our theorem are close to sharp.
	\end{abstract}
	
	\begin{keyword}
evoluted set \sep attainable set \sep Lebesgue measure

\MSC[2020]{93B03, 93B27, 28A99}
\end{keyword}

	\cortext[cor1]{Corresponding author}
	
	\maketitle
	
	\section{Introduction}

	The study of the attainable set from a point is a crucial problem in control theory, starting from the classical orbit, Rashevsky-Chow and Krener theorems, see \cite{agra,jurdj,krener}. If the initial state is not precisely identified, but lies in a given set, the problem gets even more complicated. The goal of this article is to study such problem in a first, simplified setting.
	
	Here, we consider a fixed Lipschitz vector field $v(x)$ acting on sets via the flow $\Phi^v_t$ it generates. 
Given an initial set $S$, we aim to describe the evoluted set 
\begin{equation}\label{intro-ev}
S^t:=\cup_{\tau\in[0,t]}\Phi^v_\tau(S),
\end{equation} that is the set of points reached at times $\tau\in[0,t]$. It was studied e.g. in \cite{colombo,lorenz,pogo}, with the name ``funnel'' too. One can also observe that $S^t$ is the attainable set at time $t$ for the control problem
\begin{eqnarray}
&&\dot x(t)=u(t)v(x(t)),\qquad\qquad u(t)\in[0,1],\qquad\qquad x(0)\in S.
\label{e-control}
\end{eqnarray}

A first question needs to be answered:

{\it If the initial set $S$ has a negligible boundary (i.e. its Lebesgue measure is zero), does the evoluted set have a negligible boundary too?}
	
	The first, striking result of this article is to provide a negative answer to this question. In Example~\ref{ex-plane} below, we exhibit a set $S\subset\R^2$ with negligible boundary and such that the boundary of the evoluted set $S^t$ is not negligible. Even more surprisingly, the counterexample relies on very low regularity of $S$, while the vector field $v$ is constant (so extremely regular). Another counterexample was provided in \cite{BoarottoRossi}.
	
	We then turn our attention to find regularity properties of $S$ that ensure that the evoluted set keeps having negligible boundary. Our main result is based on the definition of Lebesgue point for the complement $S^\C$ of a set $S$, that we recall here.
	
	\begin{defi}[Lebesgue point of a set] \label{def_Lebpt}
Let $S\subset\R^n$ be a set. We say that $x\in \R^n$ is a Lebesgue point of the complement $S^\complement$ if 
\be 
\lim_{r\to 0^+}\frac{\Ll^n( B(x,r)\setminus S)}{\Ll^n(B(x,r))}=1.
\ee
\end{defi}

Our main result is given below. As customary, $\Ha^k$ denotes the $k$-dimensional Hausdorff measure in $\R^n$.
\begin{thm}\label{teo_principale}
If $v$ is a Lipschitz vector field in $\R^n$ and $S\subset\R^n$ is any set such that $\Ll^n(\partial S)=0$ and $\Ha^{n-1}$-almost every point of its boundary $\partial S$ is not a Lebesgue point of $S^\complement$, then $\Ll^n(\partial(S^t))=0$ for every $t\in\R$.
\end{thm}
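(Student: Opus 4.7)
The plan is to identify $\partial S^t$ with a Lipschitz image of $[0,t]\times\partial S$, peel off a negligible part via the Lebesgue-point hypothesis, and use density propagation together with the Lebesgue density theorem.

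First, I would show $\partial S^t \subset F([0,t]\times\partial S)$, where $F(\tau,x):=\Phi^v_\tau(x)$ is Lipschitz. Given $y\in\partial S^t\subset\overline{S^t}$, after reducing to a bounded case, write $y=\Phi^v_\tau(x)$ with $x\in\overline S$; if $x\in\INT S$ then the bi-Lipschitz nature of $\Phi^v_\tau$ gives $y\in\Phi^v_\tau(\INT S)=\INT(\Phi^v_\tau(S))\subset\INT(S^t)$, contradicting $y\in\partial S^t$. So $x\in\partial S$.

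Second, set $N:=\{x\in\partial S : x \text{ is a Lebesgue point of }S^\complement\}$. The hypothesis gives $\Ha^{n-1}(N)=0$; hence $\Ha^n([0,t]\times N)=0$ by a direct covering argument treating $[0,t]$ as a one-dimensional factor, and Lipschitz invariance of Hausdorff measures yields $\Ll^n(F([0,t]\times N))=0$. For $x\in\partial S\setminus N$ one has $\limsup_{r\to 0^+}\Ll^n(B(x,r)\cap S)/\Ll^n(B(x,r))>0$. The uniform bi-Lipschitz bound on $\Phi^v_\tau$ (for $\tau$ in the compact set $[0,t]$) sends $B(x,r)\cap S$ into $B(y,Lr)\cap S^t$ with controlled volume distortion, transporting this positive upper density to positive upper density of $S^t$ at $y=\Phi^v_\tau(x)$. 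Hence $\Ll^n$-a.e. $y\in\partial S^t$ has strictly positive upper density in $S^t$.

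Third, the Lebesgue density theorem gives density in $\{0,1\}$ for $S^t$ at $\Ll^n$-a.e. point of $\R^n$, so combined with the previous step, $\Ll^n$-a.e. $y\in\partial S^t$ has density exactly $1$ in $S^t$. Coupling this with the parallel fact -- that $\Ll^n$-a.e. $y\in\partial S^t$ also has positive upper density in $(S^t)^\complement$ -- would place $\Ll^n$-a.e. $y\in\partial S^t$ in the measure-theoretic boundary of $S^t$, a $\Ll^n$-null set, whence $\Ll^n(\partial S^t)=0$.

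The main obstacle is the symmetric bound on $(S^t)^\complement$-density at $\Ll^n$-a.e. $y\in\partial S^t$. The hypothesis is one-sided, and $(S^t)^\complement=\bigcap_{\sigma\in[0,t]}\Phi^v_\sigma(S^\complement)$ is not itself an evoluted set, so the direct propagation scheme used for $S^t$ does not apply: upper density of $S^\complement$ at $x$ does not transfer to $(S^t)^\complement$ at $y$ by the same argument. Plausible routes are to couple the topological fact $y\in\overline{(S^t)^\complement}$ with a tube/transversality analysis along the flow (considering backward trajectories of sequences $z_n\in(S^t)^\complement\to y$), or to bound $S^t\setminus\INT(S^t)$ directly via $\INT(S^t)\supset\bigcup_{\tau\in(0,t)}\Phi^v_\tau(\INT S)$ together with $\Ll^n(\partial S)=0$.
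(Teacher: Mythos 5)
Your first two steps are sound and parallel the paper's own strategy: the inclusion $\partial(S^t)\subset\Psi([0,t]\times\partial S)$ is the content of Lemma~\ref{lem_bordi}, and disposing of the Lebesgue points of $S^\complement$ via $\Ha^n([0,t]\times N)=0$ and Lipschitz invariance is exactly~\eqref{eq_Ntnegl}. But the proof is not complete, and you say so yourself: your concluding scheme via the measure-theoretic boundary of $S^t$ needs that $\Ll^n$-a.e.\ $y\in\partial(S^t)$ also has positive upper density in $(S^t)^\complement$, and you have no argument for this. Without that complementary bound, knowing that a.e.\ $y\in\partial(S^t)$ has density $1$ in $S^t$ only yields $\Ll^n(\partial(S^t)\setminus S^t)=0$, which says nothing about $\Ll^n(\partial(S^t))$ itself. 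This is a genuine gap, and the ``tube/transversality'' route you float is not a proof.

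The missing idea --- which is in fact your last parenthetical suggestion, left unexecuted --- is that one should not try to control the density of $(S^t)^\complement$ at all, but rather the density of the \emph{open} set $I^t:=\bigcup_{\tau\in[0,t]}\Phi^v_\tau(I)$, $I:=\INT(S)$. Since $\Ll^n(\partial S)=0$, a point $x\in\partial S$ that is not a Lebesgue point of $S^\complement$ is equally not a Lebesgue point of $I^\complement$, so $I$ has positive upper density at $x$; by bi-Lipschitz invariance of Lebesgue points (Proposition~\ref{prop_PUDCLipeo}) the open set $\Phi^v_\tau(I)$ has positive upper density at $y=\Phi^v_\tau(x)$. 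Now $\Phi^v_\tau(I)\subset I^t\subset\INT(S^t)$, and $I^t$ is disjoint from $\partial(S^t)$ and from $(\partial S)^t\setminus I^t$; hence at every such $y$ the set $(\partial S)^t\setminus I^t$ has lower (outer) density strictly less than $1$. A set none of whose points has outer density $1$ is Lebesgue negligible, so $\Ll^n\bigl(B^t\setminus I^t\bigr)=0$ where $B$ is the non-Lebesgue part of $\partial S$; together with your first two steps and $\partial(S^t)\subset(\partial S)^t\setminus I^t\subset N^t\cup(B^t\setminus I^t)$ this finishes the proof. That one-line switch --- from ``density of the complement of $S^t$'' to ``density of the open core $I^t$, which $\partial(S^t)$ must avoid'' --- is precisely how the paper closes the argument, and it is the step your proposal lacks.
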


Several corollaries and the complete proof are given in Section~\ref{s-main} below. In particular, regularity hypotheses of Theorem~\ref{teo_principale} hold for several classes of interest for $S$, such as Lipschitz or smooth domains. Moreover, the same result holds for the evoluted set where initial and final times are excluded, i.e. replacing $\tau\in(0,t)$ in~\eqref{intro-ev}. 

\begin{remark}
The condition $\Ll^n(\partial S)=0$ in Theorem~\ref{teo_principale} seems natural, observing that, when $v\equiv 0$, one has $S^t=S$ and $\Ll^n(\partial (S^t))=\Ll^n(\partial S)$.
Even with non-vanishing vector fields, the thesis would not of course hold at $t=0$.
\end{remark}

\begin{remark} The assumptions in Theorem~\ref{teo_principale} look satisfactory, as they are satisfied even by open sets with wild (fractal) boundaries. In fact, the boundary of an open set $S$ might have fractal Hausdorff dimension even if no point of such boundary is a Lebesgue point of $S^\complement$, and thus necessarily $\Ll^n(\partial S)=0$. A well-known example on the plane is the open region $S\subset\R^2$ enclosed by the classical von Koch (closed snowflake) curve (see e.g.~\cite{Falconer}), whose Hausdorff dimension is $\log_3 4$.

%
One can use similar fractal constructions to produce bounded open sets $S\subset\R^n$ such that the Hausdorff dimension of $\partial S$ is any desired number in $(n-1,n)$, but such that no point of $\partial S$ is a Lebesgue point of $S^\complement$. Eventually, a countable union of such sets produces open sets $S\subset\R^n$ such that $\partial S$ has Hausdorff dimension $n$, but such that no point of $\partial S$ is a Lebesgue point of $S^\complement$.
\end{remark}

As for infinite-time horizon control problems, one might ask if Theorem~\ref{teo_principale} hold for $t=+\infty$ too, i.e. by considering the union over $\tau\in[0,+\infty)$. The answer is, surprisingly once again, negative. We provide counterexamples in Examples~\ref{ex-unbounded}--\ref{ex-infty}.

The question about evoluted sets is even more interesting when it is interpreted in terms of densities solving a Partial Differential Equation, eventually with internal control. Assume to have an initial state that is a probability measure on $S$, e.g. because the initial configuration is not precisely identified, and consider $S$ to be some form of ``safety region''. If the measure is absolutely continuous with respect to the Lebesgue measure, is it true that time modulations of the flow~\eqref{e-control} do not concentrate mass along the boundary, then eventually giving a non-zero probability of being close to unsafe configurations? This problem has been addressed in \cite{controllabilityTPDE,mintime}. This is also one of the main motivations and future applications of the result presented here: to develop a theory describing the reachable set starting from a measure under control action. See further results in this direction in \cite{bonnet2,bonnet1}.

{The results presented in this article can be seen as the final and most general ones, among a list of contributions by the authors. Indeed:
\begin{itemize}
\item in \cite{mintime}, we proved that $\Ll^n(\partial(S^t))=0$ under the stronger condition of $S$ being open and satisfying the so-called uniform interior cone condition. The proof relies on a Gronwall argument, adapted to evolution of interior cones under flow action.
\item In \cite{BoarottoRossi}, we prove the same property, but under the weaker hypothesis of $S$ being a $C^{1,1}$ domain. The proof is based on a careful study of the evolution of the normal to the boundary under flow action.
\item In the present paper, we understand that the same result holds even just assuming that $\Ha^{n-1}$-almost each point of the boundary is not a Lebesgue point of the complementary set, see Theorem \ref{teo_principale}. The proof couples measure theory with properties of evolution of non-Lebesgue points under flow action, recalling that the flow is bi-Lipschitz.\\
\end{itemize}}

	The structure of the article is the following: in Section~\ref{s-prel} we collect the main definitions, as well as known results. Section~\ref{s-ex} collects counterexamples to questions introduced above, eventually showing sharpness of the hypotheses in our main theorem. Section~\ref{s-main} presents the proof of our main result, that is Theorem~\ref{teo_principale}.

	\section{Definitions and preliminary results} \label{s-prel}

In this article, we will use the following notation: $S$ is a general subset of $\R^n$, with $S^\C:=\R^n\setminus S$ being its complement. We also define its topological closure $\bar S$, its interior $\INT(S)$ and its boundary $\partial S:=\bar S\setminus \INT(S)$.

We will deal with Lipschitz functions and vector fields. More precisely, we will deal with \emph{globally Lipschitz functions}, i.e. functions for which there exists $L>0$ such that the Lipschitz condition $|f(x)-f(y)|\leq L |x-y|$ holds for all $x,y\in \R^n$. Most of the results can be translated to the local setting, provided that more conditions ensuring some form of compactness are added. See e.g. Corollary~\ref{c-c} below. 

We will also use the following definition of Lipeomorphism, also known as bi-Lipschitz function.
	
	\begin{defi}\label{defi:lipeo}
		Given $U,V\subset \R^n$, a map $F:U\to V$ is said to be a {\em Lipeomorphism} if $F$ is a Lipschitz continuous bijective map from $U$ to $V$ and its inverse $F^{-1}$ is Lipschitz as well.
	\end{defi}

We now recall the definition of the flow $\Phi^v_t$ of a vector field $v$.

\begin{defi}[Flow of a vector field] 
Let $t\in\R$ and let $v$ be a Lipschitz vector field on $\R^n$. The flow $\Phi_t^v:\R^n\to \R^n$ along $v$ at time $t$ is the map $\Phi_t^v(x_0):=x(t)$ that returns the value at time $t$ of the (unique) solution to the Cauchy problem
	\be
		\left\{
			\begin{aligned}
					\dot{x}(t)&=v(x(t)),\\
					x(0)&=x_0.
			\end{aligned}
		\right.
	\ee
\end{defi}

\begin{remark}\label{R:quattroo}
Consider a Lipschitz vector field $v$ and denote by $L$ its Lipschitz constant. Then it is well-known that:
\begin{enumerate}
\item The flow is invertible, and it holds $(\Phi^v_t)^{-1}=\Phi^v_{-t}$;
\item For all $T>0$ and $t\in [-T,T]$, the map $\Phi^v_t:\R^n\to\R^n$ is a Lipeomorphism with Lipschitz constant $e^{LT}$. This is a consequence of Gr\"onwall's inequality.
\item The map $\Psi(x,t):=\Phi^v_t(x):\R^n\times\R\to\R^n$ is locally Lipschitz continuous. As a consequence, for $N\subset \R^n\times\R$ the condition $\Ha^n(N)=0$ implies $\Ha^n(\Psi(N))=0$. 
\end{enumerate}
\end{remark}
Our proof of Theorem~\ref{teo_principale} only relies on the above properties of the flow.

We are now ready to define the evoluted set $S^t$, that is the main object that is studied in the article.

	\begin{defi}[Evoluted set] \label{d-ev}
	Let $v$ be a Lipschitz vector field on $\R^n$ and $\Phi^v_t$ its flow. Given a set $S\subset \R^n$ and $t\geq 0$, we define the {\em evoluted set}
	\be
		S^t:=\bigcup_{\tau\in [0,t]}\Phi_\tau^v(S).
	\ee
	\end{defi}
	
	\section{Counterexamples} \label{s-ex}
		
In this section, we provide counterexamples that show that the hypotheses of the main Theorem~\ref{teo_principale} are close to sharpness. In Example~\ref{ex-plane}, we will show that the condition $\Ll^n(\partial S)=0$ does not ensure $\Ll^n(\partial (S^t))=0$ even with $S$ open bounded and $v$ very regular (indeed constant).
In Example~\ref{ex-unbounded}, we provide a variation of Example~\ref{ex-plane}, focusing on the infinite time horizon: we define an unbounded set $A_u$, satisfying the hypothesis of Theorem~\ref{teo_principale}, whose evoluted set $S^{+\infty}_u$ has non-neglibigle boundary.

In Example~\ref{ex-infty}, very different from the previous ones, we prove again that the evoluted set for $t=+\infty$ can have non-neglibigle boundary even when the hypotheses of Theorem~\ref{teo_principale} are satisfied. We start with a bounded connected set $S$, but with a vector field which is more complex and vanishes outside $S^\infty$. An easy variation of this example would show that, for {\em every} connected and simply connected bounded open set $\Omega\subset\R^2$, there exist a Lipschitz vector field $v$ on $\R^2$ and a ball $S=B(p,r)\subset\R^2$ such that $S^\infty=\Omega$.

\begin{example}[$\Ll^n(\partial S)=0$ does not ensure $\Ll^n(\partial (S^t))=0$]\label{ex-plane}
Consider the sequence of positive natural numbers $i=1,2,\ldots$ and write each of them as $i=2^n+k$ with $n$ the maximal natural exponent $\lfloor\log_2(i)\rfloor$ and $k=0,\ldots,2^{n}-1$. Define the corresponding sequence $(x_i)_{i\geq 1}$ by $x_i=\frac{k}{2^n}$, where $n,k$ are given above. It is easy to observe that $x_i$ is a dense sequence in $[0,1]$.

We are now ready to define the set $S\subset \R^2$ in Figure~\ref{fig:controesempio_piano} as follows:
\begin{align}
&S:= \cup_{i=1,2,\ldots} B(c_i,r_i), 
&&c_i=(x_i,0),
&&r_i:=2^{-i-2}\ .
\end{align}
Notice that the projection of $S$ on the $x$-axis is the relatively open set
\begin{align} 
&O=\cup_{i\geq 1}(x_i-r_i,x_i+r_i), 
&&
\Ll^1(O)\leq \sum_{i=1}^{+\infty}2r_i=\frac12\ . 
\end{align} 
The boundary $\partial S$ is contained in the union of the circles $\partial B(c_i,r_i)$ and the segment $[0,1]\times \{0\}$. In particular, it is contained in a countable union of sets with zero $\Ll^2$-measure, then it satisfies $\Ll^2(\partial S)=0$. Moreover, $\partial S$ contains the compact Cantor-like set $K\times\{0\}$, where
\begin{align} 
K:= [0,1]\setminus O, 
&&\Ll^1(K)=1-\Ll^1(O)\geq\frac12\ .
\end{align} 
Indeed, each point in $[0,1]\times\{0\}$ is in the closure of $S$, since the sequence $x_i$ is dense in $[0,1]$, while $S$ is open and thus coincides with its interior.
We now choose the constant vector field $v=(0,1)$. 
As the projection of $S$ on the $x$-axis is $O$ then the boundary of the evoluted set $\partial (S^1)$ contains $K\times [0,1]$ and, in particular, $\Ll^2(\partial (S^1))\geq \Ll^1(K)\cdot 1>0$.

\begin{figure}
	\includegraphics[width=\linewidth]{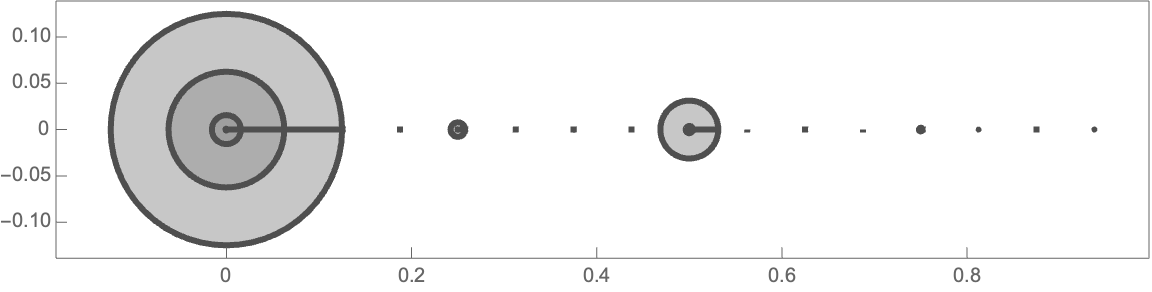}
\caption{The open set $S$ of Example~\ref{ex-plane}, which evolves with the constant vector field $v=(0,1)$.}
\label{fig:controesempio_piano}
\end{figure}
\end{example}

\begin{example}[Infinite time: initial unbounded set] \label{ex-unbounded}
With notations of Example~\ref{ex-plane}, define a new unbounded open set $A_u$ as follows:
\begin{align}
&A_u:= \cup_{i=1,2,\ldots} B(c_{i},r_i) 
&&\text{where now
$c_{i}=(x_i,-i)$.}
\end{align}
The projection $A_u$ on the $x$ axis is again $O$: in particular $\partial (A_u^{\infty})$ contains $ K\times[0,1]$, thus $\Ll^2(\partial (A_u^{\infty}))>0$. Remark that $A$ satisfies the hypotheses of Theorem~\ref{teo_principale}, thus $\Ll^2(\partial (A_u^{t}))=0$ for any finite time $t\in\R$.
\end{example}

\begin{example}[Infinite time: initial bounded set] \label{ex-infty} Consider an {\em Osgood curve}~\cite{Osgood} in the plane, i.e. a Jordan curve $\Gamma\subset\R^2\equiv \mathbb C$ such that $\mathscr L^2(\Gamma)>0$, and let $\Omega$ be the bounded connected open region enclosed by $\Gamma$. Let $D\subset\R^2\equiv \mathbb C$ be the unit (open) disk centered at the origin. Since $\Omega$ is also simply connected (\cite[page~2]{pommerenke}), by the Riemann Mapping Theorem~\cite[page~4]{pommerenke} there exists a biholomorphic map $G:D\to\Omega$. Moreover, by Carath\'eodory's theorem~\cite[Theorem~2.6]{pommerenke} $G$ can be extended to a homeomorphism $G:\overline D\to\overline\Omega$.

Let $g:[0,1)\to (0,+\infty)$ be a positive smooth function; $g$ will be chosen later in such a way that, as $r\to 1^-$, both $g(r)$ and $g'(r)$ decrease to 0 ``rapidly enough'', in a way depending on $G$ only. Consider the vector field $u(q):=g(|q|)q$ for $q\in D$; assuming also that $g$ is constant on $[0,1/2]$, then $u\in C^\infty(D)$. Moreover, the flow of $u$ is radial and it can be easily shown that, for every open neighborhood $U\subset D$ of $0$, it holds $U^\infty=\cup_{t\geq 0} \Phi^u_t(U)=D$.

Consider now the push-forward of $u$, i.e., the smooth vector field $v$ on $\Omega$ defined by $v(G(q)):=d G_q(u(q))$. We will later prove that $v$ is Lipschitz continuous on $\Omega$, hence it can be extended to a Lipschitz vector field in $\R^2$. The flow of $v$ on $\Omega$ is conjugated to the flow of $u$; in particular, choosing $S:=B(G(0),\delta)\subset\Omega$ and setting $U:=G^{-1}(S)$, we have that 
\[
S^\infty=\bigcup_{t\geq 0} \Phi^v_t(S) = G\left( \bigcup_{t\geq 0} \Phi^u_t(U)\right)=G(D)=\Omega.
\]
Therefore $\mathscr L^2(\partial(S^\infty))=\mathscr L^2(\partial\Omega)>0$ and we have only to prove that $v$ is Lipschitz continuous on $\Omega$. To this end, on differentiating the equality 
\[
v(p)=g(|G^{-1}(p)|)\; \nabla G(G^{-1}(p))\:G^{-1}(p)
\]
(where we use matrix notation) we obtain the estimate
\be\label{eq_stima}
\begin{split}
|\nabla v(p)|\leq
&\; |g'(|q|)|\,\,|\nabla G^{-1}(p)|\,\, |\nabla G(q)|\,\,|q|\\
&\; + g(|q|)\,\Big[\; |\nabla^2 G(q)|\,\,|\nabla G^{-1}(p)|\,\,|q|
+ |\nabla G(q)|\,\, |\nabla G^{-1}(p)|\; \Big],
\end{split}
\ee
where $q:=G^{-1}(p)$. 
Let us show that the quantity on the right hand side of~\eqref{eq_stima} can be bounded uniformly in $p\in\Omega$ with a proper choice of the function $g$; this will prove the Lipschitz continuity of $v$. 

For $r\in(0,1)$ define
\begin{align*}
m_0(r):=1+
\max_{q\in \overline{B(0,r)}}\Big\{&|\nabla G^{-1}(G(q))| \,\,|\nabla G(q)|\,\,|q|+ |\nabla^2 G(q)|\,\,|\nabla G^{-1}(G(q))|\,\,|q|\\
& + |\nabla G(q)|\,\, |\nabla G^{-1}(G(q))|
\Big\}
\end{align*}
and observe that~\eqref{eq_stima} gives
\be\label{eq_stima2}
|\nabla v(p)| \leq \big(|g'(|q_p|)|+g(|q_p|)\big)\:m_0(|q_p|)\qquad\forall\: p\in\Omega.
\ee
The function $m_0$ is continuous and non-decreasing on $(0,1)$; fix a smooth, non-decreasing function $m_1:(0,1)\to \R$ such that $m_1\geq m_0\geq 1$ and consider the smooth and decreasing function
\[
g_1(r):=\int_r^1\frac{1}{m_1(\rho)}\:d\rho,\qquad r\in(0,1).
\]
Eventually, let $g:[0,1)\to\R$ be a positive, non-increasing smooth function such that $g$ is constant on $[0,1/2]$ and $g=g_1$ on $[2/3,1)$; in particular, for every $r\in[2/3,1)$ we have 
\be\label{eq_stimesug}
\begin{split}
&|g'(r)|=\frac 1{m_1(r)}\leq \frac 1{m_0(r)}\\
&|g(r)|=\int_r^1\frac1{m_1(\rho)}d\rho \leq \frac{1-r}{m_1(r)}\leq \frac 1{m_0(r)}
\end{split}
\ee
due to the monotonicity of $m_1$. 
From~\eqref{eq_stima2} and~\eqref{eq_stimesug} we deduce
\begin{align*}
&|\nabla v(p)| \leq \left(\max_{[0,2/3]} \{|g'|+|g|\}\right)m_0(2/3)\quad
&&\text{if }p\in G(B(0,2/3)),\\
&|\nabla v(p)| \leq 2
&&\text{if }p\in \Omega\setminus G(B(0,2/3))
\end{align*}
and the Lipschitz continuity of $v$ follows.
\end{example}


\section{Proof of the main result}	 \label{s-main}

In this section, we prove our main theorem. We first prove two auxiliary results.	

\begin{prop}\label{prop_PUDCLipeo}
If $\Phi:\R^n\to\R^n$ is a Lipeomorphism and $S\subset\R^n $, then $\Phi(x)$ is a Lebesgue point of $ (\Phi(S)) ^\complement$ if and only if $x $ is a Lebesgue point of $S^\complement$.
\end{prop}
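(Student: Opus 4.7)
The plan is to transfer the density conditions at $x$ and $\Phi(x)$ through the bi-Lipschitz structure of $\Phi$. Denote by $L$ the Lipschitz constant of $\Phi$ and by $L'$ that of $\Phi^{-1}$. The proof rests on two elementary ingredients: (i) the two-sided ball inclusion
\[
B(\Phi(x),r/L') \;\subset\; \Phi(B(x,r)) \;\subset\; B(\Phi(x),Lr)\qquad \forall\,r>0,
\]
where the right inclusion is immediate from $\Phi$ being $L$-Lipschitz, while the left one is obtained by applying the $L'$-Lipschitz continuity of $\Phi^{-1}$ to a ball centered at $\Phi(x)$; and (ii) the standard Lipschitz measure estimate $\Ll^n(\Phi(A))\le L^n\,\Ll^n(A)$ for every $A\subset \R^n$, read as an outer-measure inequality so that no preliminary measurability assumption on $S$ is required.

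Given these two ingredients, the argument is a one-line density comparison. Assuming $x$ is a Lebesgue point of $S^\complement$, I would fix $s>0$, set $r:=L's$, apply (i) to include $B(\Phi(x),s)\cap \Phi(S)$ into $\Phi(B(x,r)\cap S)$, and invoke (ii) to bound the Lebesgue measure of the latter by $L^n\,\Ll^n(B(x,r)\cap S)$. Dividing by $\Ll^n(B(\Phi(x),s))$ and using $\Ll^n(B(x,r))=(L')^n\,\Ll^n(B(\Phi(x),s))$ reduces the target density ratio at $\Phi(x)$ to at most $(LL')^n$ times the density ratio at $x$; letting $s\to 0^+$ forces $r\to 0^+$ and delivers the conclusion that $\Phi(x)$ is a Lebesgue point of $(\Phi(S))^\complement$. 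The converse direction follows verbatim by applying the same argument with $\Phi^{-1}$, itself a Lipeomorphism, in place of $\Phi$ and with $\Phi(S)$ in place of $S$.

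There is no substantive obstacle: once the ball sandwich in (i) is written down and coupled with the measure bound in (ii), the rest is algebra on the defining quotient. The only subtle point worth flagging is that $S$ need not be Lebesgue measurable a priori, so the measure estimates must be interpreted for outer measure, which is automatic for Lipschitz-continuous maps and costs nothing in the argument.
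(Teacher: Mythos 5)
Your proposal is correct and follows essentially the same route as the paper: a bi-Lipschitz sandwich of balls plus the outer-measure bound $\Ll^n(\Phi(A))\le L^n\Ll^n(A)$, yielding that the density of $\Phi(S)$ at $\Phi(x)$ is controlled by $(LL')^n$ times the density of $S$ at $x$, with the converse obtained by swapping $\Phi$ and $\Phi^{-1}$. The only cosmetic difference is that you prove the forward implication directly while the paper proves the reverse one first, which is immaterial given the symmetric structure.
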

\begin{proof}
Fix $L>0$ and $M>0$ such that for every $y,z\in\R^n$
\[
\|\Phi(y)-\Phi(z)\|\leq L\|y-z\|\quad\text{and}\quad\|\Phi^{-1}(y)-\Phi^{-1}(z)\|\leq M\|y-z\|.
\]
Let us show that if $\Phi(x)$ is a Lebesgue point of $(\Phi(S))^\complement$ then $x$ is a Lebesgue point of $S^\complement$.
The other implication follows exchanging $\Phi$, $\Phi^{-1}$, $\Phi(S) $, $S$.
For $r>0$, it holds
\be
0\leq {\Ll^n(S\cap B(x,\tfrac{r }L))} \leq M^n\Ll^n(\Phi(S\cap B(x,\tfrac {r }L)))
\leq
 M^n\Ll^n(\Phi(S)\cap B(\Phi(x),r )).
\ee
As a consequence, if $\limsup_{r\to0^+} \Ll^n(\Phi(S)\cap B(\Phi(x),r ))/\Ll^n(B(\Phi(x),r ))=0$, then $\limsup_{r\to0^+} \Ll^n(S\cap B(x,r))/\Ll^n(B(x,r ))=0$. This proves the statement.
\end{proof}

\begin{lemma}\label{lem_bordi}
If $v$ is a Lipschitz vector field in $\R^n$ and $S\subset\R^n$ is any set, then
\be\label{eq_lembordi}
\text{for every $t>0$}\qquad\partial(S^t)\subset (\partial S)^t
\qquad\text{and }\qquad \overline{(\partial S)^t}= (\partial S)^t.
\ee
\end{lemma}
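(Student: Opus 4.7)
My plan is to treat the two statements in Lemma~\ref{lem_bordi} separately, handling the closedness of $(\partial S)^t$ first (it is the cleaner one), and then deducing the inclusion $\partial(S^t)\subset(\partial S)^t$ by a contradiction argument based on the openness of $\INT(S)$ and the bi-Lipschitz continuity of the flow.

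For the closedness $\overline{(\partial S)^t}=(\partial S)^t$, I would take a sequence $y_k\in(\partial S)^t$ with $y_k\to y$ and write $y_k=\Phi^v_{\tau_k}(x_k)$ for some $\tau_k\in[0,t]$ and $x_k\in\partial S$. Since $[0,t]$ is compact I extract $\tau_k\to\tau\in[0,t]$; then, invoking the joint continuity of $\Psi(x,s)=\Phi^v_s(x)$ recorded in Remark~\ref{R:quattroo}(3), the points $x_k=\Phi^v_{-\tau_k}(y_k)$ converge to $x:=\Phi^v_{-\tau}(y)$. Because $\partial S$ is closed, $x\in\partial S$, so that $y=\Phi^v_\tau(x)\in(\partial S)^t$.

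For the inclusion, I take $y\in\partial(S^t)$. By definition I get simultaneously a sequence $y_k\in S^t$ with $y_k\to y$ and a sequence $z_k\in(S^t)^\complement$ with $z_k\to y$. From the first sequence I write $y_k=\Phi^v_{\tau_k}(x_k)$ with $\tau_k\in[0,t]$ and $x_k\in S$, and, repeating the compactness/continuity argument above, I obtain $\tau_k\to\tau\in[0,t]$ and $x_k\to x:=\Phi^v_{-\tau}(y)\in\overline{S}$. It remains to check that $x\in\partial S$, since then $y=\Phi^v_\tau(x)\in(\partial S)^t$ as required.

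Suppose toward a contradiction that $x\in\INT(S)$ and pick $r>0$ with $B(x,r)\subset S$. Using Remark~\ref{R:quattroo}(2), the map $\Phi^v_{-\tau}$ is continuous, so there exists a neighborhood $V$ of $y$ with $\Phi^v_{-\tau}(V)\subset B(x,r)\subset S$. If $\tau\in(0,t]$ I simply observe that every $y'\in V$ satisfies $y'=\Phi^v_\tau(\Phi^v_{-\tau}(y'))\in\Phi^v_\tau(S)\subset S^t$, so $V\subset S^t$; this contradicts the existence of $z_k\to y$ outside $S^t$. The only case that needs a tiny separate check is $\tau=0$: then $y=x\in\INT(S)\subset S^t$, and a neighborhood of $y$ in $\INT(S)$ is already contained in $S^t$, yielding the same contradiction. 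Hence $x\in\partial S$ and the proof is complete.

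I do not foresee any real obstacle. The single subtlety is making sure the argument covers the endpoints $\tau=0$ and $\tau=t$, but the boundary $\tau=0$ case is trivial (a neighborhood of $y=x$ sits in $\INT(S)\subset S^t$) and the case $\tau=t$ requires nothing beyond the generic $\tau>0$ argument, since $\tau=t\in[0,t]$ is an admissible parameter value. Everything else is just compactness of $[0,t]$ combined with the bi-Lipschitz/continuity properties of $\Phi^v_\tau$ from Remark~\ref{R:quattroo}.
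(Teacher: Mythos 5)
Your proposal is correct and follows essentially the same route as the paper: the closedness of $(\partial S)^t$ is proved by the identical sequential-compactness argument on $[0,t]$ combined with the joint continuity of $\Psi$, and your pointwise contradiction argument for the inclusion (pulling $y\in\partial(S^t)$ back to $x=\Phi^v_{-\tau}(y)\in\overline S$ and excluding $x\in\INT(S)$ via openness of $\Phi^v_\tau(\INT(S))$) is just the unfolded version of the paper's set-theoretic chain $\partial(S^t)\subset(\overline S)^t\setminus(\INT(S))^t\subset(\partial S)^t$, which rests on exactly the two facts you use: $(\overline S)^t$ is closed and $(\INT(S))^t$ is open.
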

\begin{proof}
We first observe that if $C\subseteq \R^n$ is closed, then $ C^t$ is closed. Consider any sequence $p_n$ in $ C^t$ converging to some $\overline p$. 
By definition $p_n=\Phi^v_{\tau_n}(x_n)$ with $|\tau_n|\leq |t|$ and $x_n\in C$.
Recall that the inverse map $\Psi(p,-\tau)=(\Phi^v_{\tau})^{-1}(p)$ is continuous in both argument. Thus, eventually passing to a subsequence, let $(\tau_n)_n$ converge to some $\overline\tau$, then $x_n=\Psi(p_n,-\tau_n)$ converges to $\overline x:=\Psi(\overline p,-\overline\tau)$. As $x_n\in C$ and $C$ is closed, we conclude that $\overline p=\Phi^v_{\overline \tau}(\overline x)\in C^t$.

In particular, $(\partial S)^t$ and $(\overline S)^t$ are closed since $\partial S$ and $\overline S$ are closed. This already proves the second statement.

We now prove that $\partial(S^t)\subset (\partial S)^t$. 
Since $\Phi^v_\tau$ is an homeomorphism, then $(\INT(S))^t=\cup_{\tau\in[0,t]}\Phi^v_\tau(\INT(S))$ is open, being the union of open sets, so that $(\INT(S))^t\subset \INT(S^t)$.
Since $S^t\subset (\overline S)^t$ by definition of evoluted set and $(\overline S)^t$ is closed, then $\partial(S^t)\subset(\overline S)^t$. 
From $(\INT(S))^t\subset \INT(S^t)$ and $\partial(S^t)\subset(\overline S)^t$ we get
\[
\partial(S^t)=\partial(S^t)\setminus\INT(S^t)\subset \partial(S^t)\setminus(\INT(S))^t\subset (\overline S)^t\setminus(\INT(S))^t\subset(\partial S)^t\ .
\]
The last inclusion is just a consequence of the definition of evoluted set.
\end{proof}

\begin{remark}
When we consider the case $t=+\infty$, we lack precisely the property of Lemma~\ref{lem_bordi}, see Example~\ref{ex-infty}. Indeed, $(\partial S)^\infty$ is not anymore closed and $\partial (S^\infty)$, even though it is still subset of the closure of $(\partial S)^\infty$, might contain parts of the closure of $ (\partial S)^\infty$ of positive measure.
\end{remark}

%
%

We are now ready to prove our main result.

\begin{proof}[Proof of Theorem~\ref{teo_principale}]
The case $t=0$ is trivial. Moreover, since $\Phi^v_\tau=\Phi^{-v}_{-\tau}$, one can always assume $t>0$. Let us define $I:=\INT(S)$. By assumption we can write 
\be\label{eq_decomp}
\partial S= N\cup B,
\ee
where $\Ha^{n-1}(N)=0$ and 
\begin{align*}
B:=&\{p\in \partial S:p\text{ is not a Lebesgue point of }S^\complement\}\\
=&\{p\in \partial I:p\text{ is not a Lebesgue point of }I^\complement\}.
\end{align*}
The last equality follows from the fact that, since $\Ll^n(\partial S)=0$, the Lebesgue points of $S^\complement$ are exactly those of $I^\complement$.
Since $\Ha^n(N\times \R)=0$, Remark~\ref{R:quattroo} implies that
\be\label{eq_Ntnegl}
\Ll^n(N^t)=\Ll^n(\Psi(N\times[0,t]))=0.
\ee
We now prove that
\be\label{eq_Btnegl}
\Ll^n(B^t\setminus I^t)=0.
\ee
Fix $p\in B^t\setminus I^t$; by definition, there exist $x\in B\subset\partial I$ and $\tau\in[0,t]$ such that $p=\Phi^v_\tau(x)$. Since $x$ is not a Lebesgue point of $I^\complement$, then $p$ is not a Lebesgue point of the open set $(\Phi^v_\tau(I))^\C$. In particular
\begin{align*}
\liminf_{r\to 0^+}\frac{\Ll^n( (B^t\setminus I^t) \cap B(p,r))}{\Ll^n(B(p,r))}&\leq \liminf_{r\to 0^+}\frac{\Ll^n( B(p,r)\setminus I^t)}{\Ll^n(B(p,r))} \\
& \leq\liminf_{r\to 0^+}\frac{\Ll^n( B(p,r)\setminus \Phi^v_\tau(I))}{\Ll^n(B(p,r))} 
\ <\ 1.
\end{align*}
This proves that $B^t\setminus I^t$ has no Lebesgue points, thus~\eqref{eq_Btnegl} follows.

It is now easy to conclude the proof: since $I^t$ is the union of the open sets $\Phi^v_\tau(I)$, then $I^t$ is open and $I^t\subset \INT(S^t)$. Using Lemma~\ref{lem_bordi} and the decomposition~\eqref{eq_decomp}, the inclusions
\[
\partial(S^t)=\partial(S^t)\setminus \INT(S^t)
\subset 
\partial(S^t) \setminus I^t\subset (\partial S)^t \setminus I^t
\subset N^t \cup (B^t\setminus I^t),
\] 
together with~\eqref{eq_Ntnegl} and~\eqref{eq_Btnegl},
imply that $\Ll^n( \partial (S^t))=0$.
\end{proof}

Let us state some consequences of our main result.

\begin{cor} \label{c-c} Let $S$ be any set that satisfies one of the following conditions:
\begin{enumerate} 
\item $\Ha^{n-1}$-almost every point of $\partial S$ is not a Lebesgue point of $S^\complement$, as in Theorem~\ref{teo_principale};
\item $S$ is a Lipschitz domain (see e.g.~\cite[Definition 4.4]{EvansGariepy});
\item $S$ is a weakly Lipschitz domain (see e.g.~\cite[Definition 2.1]{AxMcIntosh});
\item $S$ is a set of finite perimeter for which $\Ha^{n-1}(\partial S\setminus\partial_* S)=0$, where $\partial_* S$ denotes the reduced boundary (see e.g.~\cite[Definition~3.54]{AFP}).
\end{enumerate}

Let $v$ satisfy one of the following conditions:
\begin{enumerate}
\item[{\rm (i)}] $v$ is autonomous and globally Lipschitz, as in Theorem~\ref{teo_principale};
\item[{\rm (ii)}] $v$ is autonomous locally Lipschitz continuous and with sub-linear growth (i.e., $\|v(x)\|\leq M(1+\|x\|)$ for every $x\in\R^n$), with $S$ moreover bounded;
\item[{\rm (iii)}] $v$ is a uniformly Lipschitz continuous and bounded {\em time-dependent} vector field $v=v(x,t):\R^n\times\R\to\R^n$.
\end{enumerate}

Then, $\Ll^n(\partial(S^t))=0$ for every $t\in\R$.
\end{cor}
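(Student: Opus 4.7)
The plan is to reduce each case to the setting of Theorem~\ref{teo_principale}. This splits into two independent tasks: (a) showing that each of conditions (2)--(4) on $S$ implies condition (1), and (b) showing that under conditions (ii)--(iii) on $v$ the proof of Theorem~\ref{teo_principale} still works verbatim, because the argument only uses the three properties of Remark~\ref{R:quattroo} (bi-Lipschitz character of $\Phi^v_\tau$ on any compact time interval, local Lipschitz continuity of the evaluation map $\Psi$, and the fact that $\Phi^v_\tau$ is a homeomorphism used in Lemma~\ref{lem_bordi}).

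For the conditions on $S$, the core claim is that at the relevant boundary points the set $S$ has a quantitative lower density bound, which immediately prevents $S^\complement$ from having density $1$ there. More precisely, for (2), using that $\partial S$ is locally the graph of a Lipschitz function $f$ with $S$ lying on one side, one gets $\Ll^n(S\cap B(x,r))\geq c r^n$ for all boundary points $x$ and small $r$, with a constant $c>0$ depending only on the Lipschitz constant of $f$; hence no boundary point is a Lebesgue point of $S^\complement$. For (3), the definition of weakly Lipschitz domain provides a local bi-Lipschitz chart sending $\partial S$ to a hyperplane and $S$ to a half-space; since a boundary point of a half-space has density exactly $1/2$, Proposition~\ref{prop_PUDCLipeo} (applied to the local chart restricted suitably) transfers this property to $S$, and again no boundary point is a Lebesgue point of $S^\complement$. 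For (4), De Giorgi's structure theorem tells us that at every point $p$ of the reduced boundary $\partial_* S$ one has $\lim_{r\to 0^+}\Ll^n(S\cap B(p,r))/\Ll^n(B(p,r))=1/2$, so $p$ cannot be a Lebesgue point of $S^\complement$; the remaining boundary points are, by hypothesis, $\Ha^{n-1}$-negligible, which matches condition (1).

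For the conditions on $v$, the plan is to check that in cases (ii) and (iii) the three properties in Remark~\ref{R:quattroo} still hold on the relevant domain of integration. In case (ii), sub-linear growth yields global existence of the flow via Gr\"onwall, and boundedness of $S$ implies that $\bigcup_{\tau\in[0,t]}\Phi^v_\tau(S)$ stays in some ball $B(0,R)$; on this ball $v$ is Lipschitz with some constant $L_R$, so by the usual Gr\"onwall argument $\Phi^v_\tau$ is bi-Lipschitz on any neighborhood of that ball uniformly for $\tau\in[0,t]$, and the map $\Psi$ is locally Lipschitz. A concrete way to carry this out is to replace $v$ outside a larger ball by a globally Lipschitz extension (e.g. multiply by a smooth cutoff), which does not affect any trajectory starting in $S$ up to time $t$, and then apply Theorem~\ref{teo_principale} as stated. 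In case (iii), uniform Lipschitzness and boundedness of $v(\cdot,t)$ give global existence and, again via Gr\"onwall, a bi-Lipschitz flow $\Phi^v_{s,t}$ on $[-T,T]$; Lemma~\ref{lem_bordi} is rephrased with the two-parameter flow, and the argument is identical, since what matters is only that $\Psi(x,\tau):=\Phi^v_{0,\tau}(x)$ is continuous in both variables, invertible with continuous inverse, and locally Lipschitz.

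The main obstacle I foresee is the weakly Lipschitz case, where one must verify that the local bi-Lipschitz chart interacts properly with the density criterion at every relevant boundary point; in practice this is handled simply by combining Proposition~\ref{prop_PUDCLipeo} with the half-space baseline density $1/2$, but one must be careful that the chart is defined on a full neighborhood so that the computation of $\Ll^n(B(x,r)\cap S)/\Ll^n(B(x,r))$ makes sense for all sufficiently small $r$. Apart from this, everything else is a routine reduction, and the conclusion $\Ll^n(\partial(S^t))=0$ for every $t\in\R$ follows from Theorem~\ref{teo_principale} applied in the appropriate reduced setting.
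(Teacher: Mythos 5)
Your proposal is correct and follows essentially the same route as the paper: conditions (2)--(4) are reduced to condition (1) via lower density bounds at boundary points (using the bi-Lipschitz chart together with Proposition~\ref{prop_PUDCLipeo} for the weakly Lipschitz case, and the density-$1/2$ property of the reduced boundary for finite-perimeter sets), while cases (ii)--(iii) are handled by cutting off $v$ outside a ball containing $\overline{S^t}$ and by observing that the two-parameter flow retains the properties of Remark~\ref{R:quattroo}. Your write-up is somewhat more explicit than the paper's (which leaves most of these verifications as routine), but there is no substantive difference in the argument.
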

\begin{proof} It is easy to prove that each of the hypotheses on $S$ implies that $\Ha^{n-1}$-almost every point of its boundary $\partial S$ is not a Lebesgue point of $S^\complement$. For the conditions 2--3 this is a direct consequence of the definition, by recalling that a Lipeomorphism maps an open set $S$ onto an open set $\Phi(S)$, and the boundary $\partial S$ into the boundary $\partial \Phi(S)$. For condition 4, it is enough to recall that $\lim_{r\to 0^+} \Ll^n(S\cap B(x,r))/\Ll^n(B(x,r))=1/2$ for every $x\in\partial_* S$. See e.g.~\cite[Theorem~3.61]{AFP}.

As for the hypotheses on $v$, the non-autonomous case is similar to the autonomous one, since the flow satisfies the same regularity hypotheses as well as Gr\"onwall's inequality. For case (ii), it is sufficient to observe that boundedness of $S$ and sublinearity of $v$ imply boundedness of $S^t$. Thus, one can change $v$ outside $\overline{S^t}$ and recover the same result.
\end{proof}

\begin{remark}
It is worth noticing that in the counterexamples to the negligibility of $\partial S^t$ provided in Examples~\ref{ex-plane}-\ref{ex-unbounded} and in~\cite{BoarottoRossi}, the open set $S$ has finite perimeter but $\Ha^{n-1}(\partial S\setminus \partial_*S)>0$. In fact, in these examples (many) points of $\partial S\setminus \partial_*S$ are Lebesgue points of $S^\complement$.
\end{remark}
\begin{remark} The corollary above also includes our previous results related to this problem. We proved in \cite{BoarottoRossi} that the statement holds when $S$ is bounded $C^{1,1}$, while we proved in \cite[Lemma 1.5]{mintime} that it holds when $S$ is open bounded and satisfying the uniform interior cone condition with $v$ of class $C^1$.
\end{remark}

	\begin{cor} Consider the evoluted set on the {\em open} time interval $B^t:=\cup_{\tau\in (0,t)}\Phi_\tau^v(S)$. The statements of Theorem~\ref{teo_principale} and Corollary~\ref{c-c} hold in this case too.
	\end{cor}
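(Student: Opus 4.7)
The plan is to exploit the decomposition $S^t = S \cup B^t \cup \Phi^v_t(S)$ and reduce the open-interval statement to the closed-interval Theorem~\ref{teo_principale}. Specifically, the target is the pointwise inclusion
\[
\partial B^t \subset \partial(S^t) \cup \partial S \cup \Phi^v_t(\partial S).
\]
Granted this, each of the three sets on the right-hand side is Lebesgue-negligible: the first by Theorem~\ref{teo_principale} (or by Corollary~\ref{c-c} under its broader hypotheses), the second by assumption, and the third because $\Phi^v_t$ is a Lipeomorphism (Remark~\ref{R:quattroo}) and therefore sends Lebesgue-null sets to Lebesgue-null sets.

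The first step I would carry out is a topological claim: writing $I:=\INT(S)$, both $I$ and $\Phi^v_t(I)$ are contained in $\INT(B^t)$. For $x\in I$, continuity of the flow furnishes some $\tau\in(0,t)$ with $\Phi^v_{-\tau}(x)\in I\subset S$, whence $x=\Phi^v_\tau(\Phi^v_{-\tau}(x))\in B^t$; since $I$ is open and contained in $B^t$, it sits inside $\INT(B^t)$. A symmetric argument using $\tau=t-\epsilon$ with $\epsilon>0$ small, together with the fact that $\Phi^v_t(I)$ is itself open (because $\Phi^v_t$ is a homeomorphism), gives $\Phi^v_t(I)\subset\INT(B^t)$.

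With this in hand, the target inclusion follows quickly. Pick $p\in\partial B^t\subset\overline{B^t}\subset\overline{S^t}$. If $p\notin\partial(S^t)$, then $p\in\INT(S^t)$, so for a small open neighborhood $U\subset S^t$ of $p$ we have $U\setminus B^t\subset S^t\setminus B^t\subset S\cup\Phi^v_t(S)$. Since $p\in\overline{(B^t)^\complement}$, it follows that $p\in\overline{U\setminus B^t}\subset \overline S\cup\Phi^v_t(\overline S)$. The first step excludes $p$ from $I\cup\Phi^v_t(I)$, so $p\in\partial S\cup\Phi^v_t(\partial S)$. The argument transfers verbatim to the alternative hypotheses of Corollary~\ref{c-c}, since in each of those cases Theorem~\ref{teo_principale} already applies.

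The only piece of real content is the first step: the open interval $(0,t)$ excludes the endpoints, so it is not \emph{a priori} clear that the initial and final slices $I$ and $\Phi^v_t(I)$ are absorbed into $\INT(B^t)$. Once this is secured via the continuity of the flow as $\tau\to 0^+$ and $\tau\to t^-$, no further measure-theoretic work beyond Theorem~\ref{teo_principale} is required.
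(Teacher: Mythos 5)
Your proposal is correct. It shares the paper's overall strategy --- establish the inclusion $\partial(B^t)\subset\partial(S^t)\cup\partial S\cup\Phi^v_t(\partial S)$ and then conclude from Theorem~\ref{teo_principale}, the hypothesis $\Ll^n(\partial S)=0$, and the fact that the Lipschitz map $\Phi^v_t$ preserves Lebesgue-null sets --- but you prove the inclusion by a genuinely different route. The paper argues sequentially: it writes $b\in\partial(B^t)$ as a limit of $b_n=\Phi^v_{\tau_n}(x_n)$ with $\tau_n\in(0,t)$ \emph{and} of a sequence $y_n\notin S^t$, extracts $\tau_n\to\tau$, and splits on $\tau=0$, $\tau\in(0,t)$, $\tau=t$. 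You instead establish the purely topological facts $\INT(S)\cup\Phi^v_t(\INT(S))\subset\INT(B^t)$ (via continuity of the flow as $\tau\to0^+$ and $\tau\to t^-$) and $S^t\setminus B^t\subset S\cup\Phi^v_t(S)$, and then split on whether $p\in\partial(S^t)$ or $p\in\INT(S^t)$. Your version is in fact more careful on one point: the paper's assertion that every $b\in\partial(B^t)$ is a limit of points $y_n\notin S^t$ is not justified when $b\in\INT(S^t)$, a case that can genuinely occur since $\partial(B^t)$ need not be contained in $\partial(S^t)$ (e.g.\ $S=\{0\}\cup(1,2)\subset\R$ with $v\equiv1$ and $t=1$ gives $1\in\partial(B^1)\cap\INT(S^1)$). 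It is precisely your first step --- that $\INT(S)$ and $\Phi^v_t(\INT(S))$ are absorbed into $\INT(B^t)$ --- that disposes of this case and shows such a point must lie in $\partial S\cup\Phi^v_t(\partial S)$. So your argument is a valid, self-contained alternative that also patches a small gap in the published proof.
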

	\begin{proof}
	It is sufficient to observe that
\be\label{e-b}
\partial (B^t)\subset \partial(S^t)\cup\partial S\cup \partial(\Phi^v_t(S))
\ee
implies $$\Ll^n(\partial (B^t))\leq \Ll^n(\partial (S^t))+\Ll^n(\partial S)+\Ll^n(\partial (\Phi^v_t(S))=0,$$ where we used our main result, as well as Proposition~\ref{prop_PUDCLipeo} and the fact that $\Ll^n(\partial S)=0$. We also need to prove~\eqref{e-b}. First observe that a point $b\in\partial (B^t)$ is both the limit of a sequence $b_n=\Phi_{\tau_n}^v(x_n)$ with $(x_n,\tau_n)\in S\times (0,t)$ and of a sequence $y_n\not\in S^t$.
	Since $\tau_n$ is bounded, we can consider a subsequence (that we do not relabel) so that $\tau_n\to \tau\in [0,t]$. We have three possibilities:
\begin{itemize}
	\item
	If $\tau\in(0,\tau)$ we already have the thesis, since $b$ is the limit of both $y_n\not\in S^t$ and $b_n=\Phi_{\tau_n}^v(x_n)$ which definitively belongs to $S^t$.
	\item 
	If $\tau=0$, then $x_n=\Psi(b_n,-\tau_n)$ and $b_n=\Psi(b_n,0)$ have the same limit $b=\Psi(b,0)$ by continuity of $\Psi$ in $B(b,1)\times[-1,1]$. Then $b$ is the limit of $x_n\in S$ and $y_n\not\in S^t\supset S$, then $b\in\partial S$.	
	\item The case $\tau=t$ is similar to the previous case $\tau=0$: one can prove that $b\in \partial\Phi^v_t(S)$.
	\end{itemize}
	\end{proof}

	\bibliographystyle{abbrv}
	\bibliography{Biblio}
	
\end{document}